\theoremstyle{plain}
\newtheorem{theorem}{Theorem}[section]
\newtheorem{corollary}[theorem]{Corollary}
\newtheorem{lemma}[theorem]{Lemma}
\newtheorem{proposition}[theorem]{Proposition}
\theoremstyle{definition}
\def\ps@pprintTitle{%
  \let\@oddhead\@empty
  \let\@evenhead\@empty
  \let\@oddfoot\@empty
  \let\@evenfoot\@oddfoot
}
\title{Galois subspaces for the rational normal curve}
\author{Robert Auffarth and Sebasti\'an Rahausen}
\address{R. Auffarth \\Departamento de Matem\'aticas, Facultad de
Ciencias, Universidad de Chile, Santiago\\Chile}
\email{rfauffar@uchile.cl}
\address{S. Rahausen \\Departamento de Matem\'aticas, Facultad de
Ciencias, Universidad de Chile, Santiago\\Chile}
\email{srahausen@gmail.com}
\thanks{Both authors were partially supported by CONICYT PIA ACT1415.}
\keywords{Galois embedding, group action, monodromy}
\subjclass[2010]{14H37,14H30}%
\begin{document}

\maketitle

\begin{abstract}
We characterize all $(n-2)$-dimensional linear subspaces of $\mathbb{P}^{n}$ such that the induced linear projection, when restricted to the rational normal curve, gives a Galois morphism. We give an explicit description of these spaces as a disjoint union of locally closed subvarieties in the Grassmannian $\mathbb{G}(n-2,n)$.
\end{abstract}

\section{Introduction}

Given an embedding $X\hookrightarrow\mathbb{P}^n$ of a $d$-dimensional smooth projective variety into projective space and a linear subspace $W\in\mathbb{G}(n-d-1,n)$, it is natural to ask about the monodromy of the linear projection $\pi:X\dashrightarrow\mathbb{P}^d$ from $X$ with center $W$, and in particular if it induces a Galois extension of function fields. The monodromy of linear projections from varieties embedded in projective space has been the focus of several articles (see for example \cite{Cuk} and \cite{Pirola}), and in the last decade the question of when a linear projection is Galois has been intensely studied under the name of \emph{Galois embeddings}. 

The formal definition of Galois embedding was introduced by Yoshihara in \cite{Yoshi}. We recall that if $X$ is a smooth projective variety of dimension $d$ and $D$ is a very ample divisor on $X$ that induces an embedding $\varphi_D:X\to\mathbb{P}^n$, then the embedding is said to be \emph{Galois} if there exists a linear subspace  $W\in\mathbb{G}(n-d-1,n)$ such that $W\cap\varphi_D(X)=\varnothing$ and the linear projection $\pi_W:\mathbb{P}^n\dashrightarrow\mathbb{P}^d$ defined by $W$ restricts to a morphism $\pi:X\to\mathbb{P}^d$ that induces a Galois extension of function fields $\pi^*(k(\mathbb{P}^d))\subseteq k(X)$. The calculation of Galois subspaces for different varieties has been an object of interest in the past few years, and especially in the case of curves and abelian varieties. See, for instance, \cite{Auff}, \cite{Cuk}, \cite{Pirola}, \cite{Takahashi}, \cite{Yoshi} and the references therein. The purpose of this article is to classify all linear subspaces of $\mathbb{P}^n$ that induce a Galois morphism for an embedding of $\mathbb{P}^1$ into $\mathbb{P}^n$, thereby answering a question asked by Yoshihara in his list of open problems \cite{Yoshiproblems}. This question was already addressed by Yoshihara himself for $n=3$ in \cite[Prop. 4.1]{Yoshi2}, and our article can be seen as a generalization of his results to arbitrary dimension.

In the case of $\mathbb{P}^1$, the only embedding of $\mathbb{P}^1$ into $\mathbb{P}^n$ such that the image is not contained in a hyperplane, modulo a linear change of coordinates, is the Veronese embedding $\nu_n:\mathbb{P}^1\to\mathbb{P}^n$ which is always Galois as we will see. Given a linear projection $\pi_W:\mathbb{P}^n\dashrightarrow\mathbb{P}^1$ with center $W$, one can ask about the Galois closure of the field extension induced by $\pi=\pi_W\circ \nu_n:\mathbb{P}^1\to\mathbb{P}^1$. The Galois group of this closure is the monodromy group of $\pi$, and is related to several hard and interesting problems; see for example \cite{Adrianov}, \cite{Cuk}, \cite{GS}, \cite{Konig}, \cite{Muller} and \cite{Muller2} among many others. 

By \cite[Theorem 2.2]{Yoshi}, an embedding of $X$ into $\mathbb{P}^n$ is Galois if and only if there exists a finite group of automorphisms $G$ of $X$ such that $|G|=(D^n)$, there exists a $G$-invariant linear subspace $\mathcal{L}$ of $H^0(X,\mathcal{O}_X(D))$ of dimension $d+1$ such that $G$ acts on $\mathbb{P}\mathcal{L}$ as the identity, and such that the linear system $\mathcal{L}$ has no base points. In practice, given a group $G$ and a $G$-invariant divisor $D$, one must find the subspace $\mathcal{L}$; this is the approach we will take in this article.

Before stating our main theorem, we define the linear subspaces we will be working with. For $a,b,c,d\in k$ we define the homogeneous linear polynomial in the variables $x_0,\ldots,x_n$
$$L_{a,c}:=\sum_{i=0}^n\binom{n}{i}(-1)^ia^ic^{n-i}x_i.$$
If $\alpha\in k$ and $n=2m$ is even, we define the polynomials
$$A_{a,b,c,d}:=\sum_{i+j+l=m}\binom{m}{i,j,l}(-1)^i(ad+bc)^i(ab)^j(cd)^lx_{i+2l}$$
$$B_{a,b,c,d}^\alpha:=\sum_{i=0}^{n}\binom{n}{i}(-1)^i(\alpha^ma^{n-i}c^i+b^{n-i}d^i)x_i.$$
We note that the hyperplane defined by $L_{a,c}$ only depends on the class of $(a,c)$ in $\mathbb{P}^1$ when $(a,c)$ is non-zero, and we will therefore denote it by $H_{[a:c]}$. The linear subspace defined by $A_{a,b,c,d}=B_{a,b,c,d}^\alpha=0$ for $(a,b,c,d)\neq(0,0,0,0)$ only depends on the class of $(a,b,c,d)$ in $\mathbb{P}^3$ and on $\alpha$ and will be denoted by $V_{[a:b:c:d]}^\alpha$.

\begin{theorem}\label{main}
Let $k$ be a field, let $\nu_n:\mathbb{P}^1\to\mathbb{P}^n$ be the Veronese embedding of degree $n\geq3$ over $k$, and assume that the characteristic of $k$ does not divide $n$ and that $k$ contains a primitive $n$th root of unity $\zeta_n$. Then we have the following:
\begin{enumerate}
\item For each $([a:c],[b:d])\in(\mathbb{P}^1\times\mathbb{P}^1)\backslash\Delta$,
$$W_{[a:c],[b:d]}:=H_{[a:c]}\cap H_{[b:d]}$$
 is a Galois subspace of $\mathbb{P}^n$ for $\nu_n$ with cyclic Galois group that is conjugate to the group generated by $[x:y]\mapsto[\zeta_nx:y]$. This gives a 2-dimensional locally closed subvariety $\mathcal{C}_n$ of $\mathbb{G}(n-2,n)$.
 
\item If $n=2m$ is even, in addition to the previous item, for each representative $\alpha\in k^\times/(k^\times)^2\mu_m(k)$ we have the Galois subspaces $V_{[a:b:c:d]}^\alpha$ for $[a:b:c:d]\in\mathbb{P}^3$ and $ad-bc\neq0$. Here the Galois group is the dihedral group of order $2m$ and is conjugate to the group generated by $[x:y]\mapsto[\zeta_nx:y]$ and $[x:y]\mapsto[\alpha y:x]$. Thus we obtain families $\mathcal{D}_m^\alpha$ which over $\overline{k}$ give a 3-dimensional locally closed subvariety $\mathcal{D}_m$ of $\mathbb{G}(n-2,n)$. 
\end{enumerate}

The above Galois subspaces are all disjoint from $\nu_n(\mathbb{P}^1)$. Moreover, if $n\notin\{4,12,24,60\}$, these are the only Galois subspaces of $\mathbb{P}^n$ for $\nu_n$ disjoint from $\nu_n(\mathbb{P}^1)$.
\end{theorem}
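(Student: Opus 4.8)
The plan is to treat the two assertions separately: disjointness is a direct evaluation on the curve, while the classification runs through Yoshihara's criterion and the list of finite subgroups of $\mathrm{PGL}_2(k)$.

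For the disjointness I would simply evaluate the defining forms on $\nu_n([s:t])=[s^n:s^{n-1}t:\cdots:t^n]$ and read off the zeros. The (multi)nomial theorem gives the three identities
\[ L_{a,c}(\nu_n([s:t])) = (cs-at)^n, \qquad A_{a,b,c,d}(\nu_n([s:t])) = \big((as-ct)(bs-dt)\big)^m, \]
\[ B_{a,b,c,d}^\alpha(\nu_n([s:t])) = \alpha^m(as-ct)^n + (bs-dt)^n, \]
where for $A$ one uses $ab\,s^2-(ad+bc)st+cd\,t^2=(as-ct)(bs-dt)$. From the first identity $H_{[a:c]}$ meets the curve only at $\nu_n([a:c])$, so $W_{[a:c],[b:d]}$ is disjoint from $\nu_n(\mathbb{P}^1)$ exactly off the diagonal $\Delta$. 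For the dihedral spaces, a common zero of $A$ and $B^\alpha$ on the curve would force $(as-ct)(bs-dt)=0$ and then, using $\alpha\neq0$, would force \emph{both} $as-ct=0$ and $bs-dt=0$; this system has no nonzero solution precisely because its determinant is $-(ad-bc)\neq0$, so $V_{[a:b:c:d]}^\alpha$ is disjoint from the curve as well.

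For the classification I would invoke \cite[Theorem 2.2]{Yoshi}: a Galois subspace disjoint from the curve is the same datum as a finite subgroup $G\leq\mathrm{Aut}(\mathbb{P}^1)=\mathrm{PGL}_2(k)$ of order $n$ together with a base-point-free two-dimensional subspace $\mathcal{L}\subseteq H^0(\mathbb{P}^1,\mathcal{O}(n))=k[s,t]_n$ on which $G$ acts through a single linear character (so $G$ is trivial on $\mathbb{P}\mathcal{L}$), the center $W$ being the common zero locus of $\mathcal{L}$. After lifting $G$ to $\mathrm{GL}_2(k)$, the condition is that $\mathcal{L}$ be a two-dimensional piece of the isotypic component for a one-dimensional character $\chi$ inside $\mathrm{Sym}^n(k^2)=k[s,t]_n$. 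The argument then has two halves: list the finite subgroups $G$ of order $n$, and for each find the characters $\chi$ whose isotypic component has dimension at least $2$ together with the base-point-free planes it contains. Under the standing hypotheses on $\mathrm{char}\,k$ and on roots of unity, the finite subgroups of $\mathrm{PGL}_2(k)$ are cyclic, dihedral, or one of $A_4,S_4,A_5$ of orders $12,24,60$. For $G=C_n$ the generator has eigenvalues $\zeta_n^{\,n-i}$ on the monomials $s^{n-i}t^i$, so the trivial character is the only one of multiplicity $\geq2$, with component $\langle s^n,t^n\rangle$; moving this single plane by conjugation produces exactly $\mathcal{C}_n$. For $G=D_m$ with $n=2m$ and $m\geq3$, imposing rotation-eigenvalue $\pm1$ and then the reflection shows once more that only the trivial character has a (two-dimensional) isotypic component, namely $\langle s^m t^m,\ s^{2m}+t^{2m}\rangle$, which is base-point-free and, after conjugation and tracking the reflection's scaling, yields the $V_{[a:b:c:d]}^\alpha$.

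The main obstacle is this dihedral step, for two reasons. First, one must rule out a second family: when $m$ is even two further semi-invariant monomials appear (those with rotation-eigenvalue $-1$), and I would check that under the reflection they split into \emph{one}-dimensional pieces, so they never give a two-dimensional isotypic component. Second, the scaling of the reflection can be modified by squares (rescaling the coordinate) and by $\mu_m(k)$ (conjugating by the rotation), so the genuine invariant of the family is $\alpha\in k^\times/(k^\times)^2\mu_m(k)$; this descent must be performed over the non-closed field $k$, which is where the rationality subtleties sit, and over $\overline{k}$ the $\mathcal{D}_m^\alpha$ collapse to a single $3$-dimensional $\mathcal{D}_m$. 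Finally, the four excluded values are exactly $\{|A_4|,|S_4|,|A_5|\}\cup\{4\}$: at $n\in\{12,24,60\}$ the polyhedral groups occur and contribute extra Galois subspaces lying on neither $\mathcal{C}_n$ nor $\mathcal{D}_m$, while the dihedral computation above genuinely requires $m\geq3$, so the remaining even value $m=2$, i.e. $n=4$, falls outside its scope because $D_2\cong C_2\times C_2$ is abelian and must be handled on its own. Disjointness of $\mathcal{C}_n$ from $\mathcal{D}_m$ can be seen directly, since the dihedral pencils contain no nonzero $n$-th powers whereas the cyclic pencils are spanned by two such powers; this completes the separation of the locally closed strata.
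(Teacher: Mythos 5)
Your proposal is correct and takes essentially the same route as the paper: both reduce, via Yoshihara's criterion and Beauville's classification of finite subgroups of $\mathrm{PGL}(2,k)$, to finding the base-point-free planes in degree-$n$ forms on which a lift of $G$ acts by a single character --- you compute at the standard models $\langle C_n\rangle$ and $\langle C_m,I_\alpha\rangle$ and then conjugate, while the paper runs the identical eigenspace computation directly in $L(D)$ for the conjugated generators $T_n$ and $J_\alpha$, and your factorization identities on the curve, $L_{a,c}\mapsto (cs-at)^n$, $A_{a,b,c,d}\mapsto\bigl((as-ct)(bs-dt)\bigr)^m$ and $B^\alpha_{a,b,c,d}\mapsto\alpha^m(as-ct)^n+(bs-dt)^n$, reproduce exactly the paper's invariant sections and the disjointness it derives from Lemma \ref{Bij}. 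The only slip is cosmetic: with the reflection's scaling tracked as you promise, the dihedral plane at the standard model is $\langle s^mt^m,\ s^{2m}+\alpha^m t^{2m}\rangle$ rather than $\langle s^mt^m,\ s^{2m}+t^{2m}\rangle$; on the other hand, your explicit exclusion of the rotation-eigenvalue $-1$ semi-invariants when $m$ is even makes precise a uniqueness point the paper leaves implicit.
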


The exceptional cases $n\in\{2,4,12,24,60\}$ are addressed in the following proposition:

\begin{proposition}\label{exceptional}
We have the following families of Galois subspaces disjoint from $\nu_n(\mathbb{P}^1)$:

\begin{enumerate}
 \item For $n=2$, every element of $\mathbb{G}(0,2)\backslash \nu_2(\mathbb{P}^1)$ gives a Galois subspace for $\nu_2(\mathbb{P}^1)$.
 \item For $n=4$, apart from $\mathcal{C}_4$, for every $\beta\in k^\times/(k^\times)^2$ there is a family $\mathcal{K}^\beta\subseteq\mathbb{G}(2,4)$ that corresponds to Galois subspaces whose Galois group is the Klein four group. Over $\overline{k}$ these give a $3$-dimensional locally closed subvariety $\mathcal{K}$ of $\mathbb{G}(2,4)$.
 \end{enumerate}
 
 If, moreover, $-1$ is the sum of two squares in $k$, then:
 \begin{enumerate}
 \item[(3)] For $n=12$, apart from $\mathcal{C}_{12}$ and the spaces $\mathcal{D}_{12}^\alpha$, there exists a 3-dimensional family $\mathcal{A}_4\subseteq\mathbb{G}(10,12)$ corresponding to Galois subspaces with Galois group $A_4$.
 \item[(4)] For $n=24$, apart from $\mathcal{C}_{24}$ and the spaces $\mathcal{D}_{24}^\alpha$, there exists a 3-dimensional family $\mathcal{S}_4\subseteq\mathbb{G}(22,24)$ corresponding to Galois subspaces with Galois group $S_4$.
 \item[(5)] For $n=60$, if $5$ is a square in $k$, then apart from $\mathcal{C}_{60}$ and the spaces $\mathcal{D}_{60}^\alpha$, there exists a 3-dimensional family $\mathcal{A}_5\subseteq\mathbb{G}(58,60)$ corresponding to Galois subspaces with Galois group $A_5$.
\end{enumerate}
 
\end{proposition}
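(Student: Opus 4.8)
The plan is to apply the criterion of Yoshihara recalled in the introduction to the remaining finite subgroups of $\operatorname{PGL}_2(k)$, exactly as is done for the cyclic and dihedral groups in Theorem~\ref{main}. By that criterion a subspace $W\in\mathbb{G}(n-2,n)$ is Galois for $\nu_n$ precisely when the two linear forms cutting out $W$ restrict along $\nu_n$ to a base-point-free pencil $\mathcal{L}\subset H^0(\mathbb{P}^1,\mathcal{O}(n))=\operatorname{Sym}^n$ of binary $n$-forms on which some subgroup $G\subset\operatorname{Aut}(\mathbb{P}^1)=\operatorname{PGL}_2(k)$ of order $n$ acts through a single character. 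Since $W$ is recovered as the common zero locus of $\mathcal{L}$, and since two pencils that differ by a change of coordinates on the target span the same two-dimensional subspace of $\operatorname{Sym}^n$, a Galois subspace with group $G$ is determined by the embedded subgroup $G\subset\operatorname{PGL}_2(k)$ together with such a pencil, and the induced morphism $\mathbb{P}^1\to\mathbb{P}^1$ is forced to be the quotient $\mathbb{P}^1\to\mathbb{P}^1/G$. Note also that $W$ is disjoint from $\nu_n(\mathbb{P}^1)$ if and only if $\mathcal{L}$ has no base points, so the two conditions to be checked coincide.

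First I would invoke the classification of finite subgroups of $\operatorname{PGL}_2(\overline{k})$ in characteristic prime to $n$: up to conjugacy these are the cyclic groups, the dihedral groups, $A_4$, $S_4$ and $A_5$. The cyclic and dihedral groups are precisely those realized in Theorem~\ref{main}, so the Galois groups not yet accounted for are the three polyhedral groups, whose orders $12$, $24$ and $60$ single out $n\in\{12,24,60\}$, together with the two degenerate low-order cases: $n=2$, where $G=C_2$, and $n=4$, where the group of order $4$ is the Klein four group $V_4$, which—being abelian—lies outside the $m\geq 3$ dihedral analysis and must be treated separately. The case $n=2$ is immediate, since every separable degree-$2$ morphism of smooth curves is Galois; hence every point of $\mathbb{G}(0,2)=\mathbb{P}^2$ lying off the conic $\nu_2(\mathbb{P}^1)$ is a Galois subspace, which is assertion~(1).

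For each of the remaining groups I would produce the pencil $\mathcal{L}$ from Klein's invariant theory of the binary polyhedral groups. Lifting $G$ to its binary cover $\widetilde{G}\subset\operatorname{SL}_2$ and using that $-I$ acts trivially on $\operatorname{Sym}^n$ for these even values of $n$, the character eigenspaces of $\operatorname{Sym}^n$ are read off from the three basic (semi-)invariants and their single syzygy. Concretely one finds $\langle x^4+y^4,\ x^2y^2\rangle$ for $V_4$; the pair of cubed tetrahedral forms $\langle t^3,\ \overline{t}^3\rangle$ in degree $12$ for $A_4$; the cube and square of the octahedral invariants of degrees $8$ and $12$ in degree $24$ for $S_4$; and $\langle f^5,\ H^3\rangle$ for $A_5$, where $f$ (degree $12$) and $H$ (degree $20$) are the icosahedral forms and the third invariant $T$ of degree $30$ is eliminated in degree $60$ by the relation $T^2\in\langle f^5,H^3\rangle$. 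In each case one checks that this is the \emph{only} character eigenspace of dimension at least $2$ (all others being at most one-dimensional in the relevant degree), that its dimension is exactly $2$, and that it is base-point-free because the two spanning forms are coprime, their zero loci being distinct polyhedral orbits. This yields, for each embedded $G$, a unique Galois subspace $W$ disjoint from $\nu_n(\mathbb{P}^1)$ with Galois group $G$, and shows there are no others.

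Finally I would descend from $\overline{k}$ to $k$ and identify the families. Over $\overline{k}$ all subgroups isomorphic to a fixed $G$ are conjugate, and since the normalizer of each polyhedral group, and of $V_4$, in $\operatorname{PGL}_2$ is finite, the locus of Galois subspaces with group $G$ is a single $\operatorname{PGL}_2$-orbit of dimension $\dim\operatorname{PGL}_2=3$; over $\overline{k}$ this produces the three-dimensional locally closed subvarieties $\mathcal{K}$, $\mathcal{A}_4$, $\mathcal{S}_4$ and $\mathcal{A}_5$. The hard part will be the arithmetic descent, which is exactly what produces the hypotheses in the statement. The $\operatorname{PGL}_2(k)$-conjugacy classes of subgroups isomorphic to $G$ are controlled by the Galois cohomology of $k$ with values in the normalizer of $G$, and one must show that a $k$-rational such subgroup exists precisely when the stated conditions hold: $-1$ a sum of two squares for $V_4$, $A_4$ and $S_4$—equivalently the splitting of the quaternion algebra $\left(\tfrac{-1,-1}{k}\right)$, which is what lets one write down commuting $k$-rational involutions together with a rational element of order $3$—and in addition $5$ a square for $A_5$, as is needed for the golden-ratio coordinates of the icosahedron. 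For $n=4$ this cohomology is nontrivial and the classes are indexed by the square classes $\beta\in k^\times/(k^\times)^2$, giving the subfamilies $\mathcal{K}^\beta$; matching these classes to the explicit $\beta$, verifying that distinct parameters yield disjoint loci each mapping isomorphically onto the orbit over $\overline{k}$, and assembling the $k$-rational classes in the polyhedral cases into the single threefolds $\mathcal{A}_4$, $\mathcal{S}_4$ and $\mathcal{A}_5$, is the technical core of the argument.
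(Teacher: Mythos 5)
Your proposal takes essentially the same route as the paper: the $n=2$ case is disposed of directly by degree, the exceptional pencils arise as the $2$-dimensional character eigenspaces of the $G$-action on $L(D)$ exactly as in the cyclic and dihedral sections, the $3$-dimensionality follows from the finiteness of the normalizers of $K_4$, $A_4$, $S_4$, $A_5$ in $\operatorname{PGL}(2,k)$, and the arithmetic hypotheses (and the indexing of $\mathcal{K}^\beta$ by square classes) come from Beauville's criteria for the existence and conjugacy of these subgroups. In fact you supply more detail than the paper, which explicitly leaves the eigenspace computations for $n\in\{4,12,24,60\}$ to the reader; your identification of the pencils via Klein's invariant forms and their syzygies correctly fills in that omitted calculation.
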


The previous two results exhaust the list of possible Galois subspaces in the case they are disjoint from $\nu_n(\mathbb{P}^1)$. The case when the Galois subspace is not disjoint from $\nu_n(\mathbb{P}^1)$ will be analyzed in Section \ref{Nondisjoint} where we give a complete explicit classification of all Galois subspaces for the Veronese embedding in any degree in the case that the base field is algebraically closed. In particular, we show that for $n\geq60$ there are $n+\lfloor\frac{n}{2}\rfloor+2$ disjoint families of Galois subspaces in $\mathbb{G}(n-2,n)$, and the biggest family is of dimension $n$. See Table \ref{classification} for the full classification.

\section{Preliminaries}

In this section we recall some classical results, along with results we will need for Galois embeddings. Let $\text{End}_n(\mathbb{P}^1)$ denote the set of endomorphisms of $\mathbb{P}^1$ of degree $n$. We see that $\mbox{Aut}(\mathbb{P}^1)$ acts on $\text{End}_n(\mathbb{P}^1)$ via composition. Let 
$$\nu_n:\mathbb{P}^1\to\mathbb{P}^n$$
denote the $n$th Veronese embedding.

\begin{lemma}\label{Bij}
There is a bijective correspondence between $\text{End}_n(\mathbb{P}^1)/\text{Aut}(\mathbb{P}^1)$ and all linear subspaces $W\in\mathbb{G}(n-2,n)$ such that $W\cap \nu_n(\mathbb{P}^1)=\varnothing$.
\end{lemma}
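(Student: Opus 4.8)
The plan is to translate everything into the language of linear systems, where the statement becomes an exercise in linear algebra. Recall that $\nu_n$ is the morphism associated to the complete linear system $|\mathcal{O}_{\mathbb{P}^1}(n)|$, so the space of hyperplanes of $\mathbb{P}^n$ is canonically identified with $\mathbb{P}(H^0(\mathbb{P}^1,\mathcal{O}(n)))$: to a nonzero $P\in H^0(\mathbb{P}^1,\mathcal{O}(n))$, i.e. a homogeneous polynomial of degree $n$ in the coordinates of $\mathbb{P}^1$, corresponds a hyperplane $H_P$, and $\nu_n([s:t])\in H_P$ if and only if $P(s,t)=0$. A subspace $W\in\mathbb{G}(n-2,n)$ has codimension $2$, so the hyperplanes containing it form a pencil $\mathbb{P}(U_W)$ for a unique $2$-dimensional subspace $U_W\subseteq H^0(\mathbb{P}^1,\mathcal{O}(n))$, and $W=\bigcap_{P\in U_W}H_P$ is recovered from $U_W$. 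This sets up a bijection between $\mathbb{G}(n-2,n)$ and the Grassmannian of pencils $U\subseteq H^0(\mathbb{P}^1,\mathcal{O}(n))$.

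First I would check that under this dictionary the condition $W\cap\nu_n(\mathbb{P}^1)=\varnothing$ corresponds exactly to $U_W$ being base-point free. Indeed, $\nu_n([s:t])\in W$ means every hyperplane through $W$ contains $\nu_n([s:t])$, i.e. $P(s,t)=0$ for all $P\in U_W$, which says precisely that $[s:t]$ is a base point of the pencil. Writing $U_W=\langle P,Q\rangle$, base-point freeness is the condition that $P$ and $Q$ have no common zero, equivalently $\gcd(P,Q)=1$; this is the right reading even when $k$ is not algebraically closed, base points being taken over $\overline{k}$.

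Next I would define the two assignments and identify the $\mathrm{Aut}(\mathbb{P}^1)$-ambiguity. To a disjoint $W$ I associate $\pi_W\circ\nu_n$, where $\pi_W:\mathbb{P}^n\dashrightarrow\mathbb{P}^1$ is a linear projection with center $W$; disjointness guarantees this composite is a morphism on all of $\mathbb{P}^1$. After choosing a basis $(P,Q)$ of $U_W$ it is the map $[s:t]\mapsto[P(s,t):Q(s,t)]$, and since $\gcd(P,Q)=1$ it has degree exactly $n$, so it lands in $\mathrm{End}_n(\mathbb{P}^1)$. The projection $\pi_W$ is only well defined up to a linear change of coordinates on the target $\mathbb{P}^1$, which is exactly the freedom of replacing $(P,Q)$ by another basis $(aP+bQ,cP+dQ)$ of the same pencil; this composes the morphism on the left with the automorphism $[u:v]\mapsto[au+bv:cu+dv]$. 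Hence $W$ determines a well-defined class in $\mathrm{End}_n(\mathbb{P}^1)/\mathrm{Aut}(\mathbb{P}^1)$. Conversely, an $f\in\mathrm{End}_n(\mathbb{P}^1)$ is given by $[P:Q]$ with $P,Q$ homogeneous of degree $n$ and coprime (coprimality being equivalent to $\deg f=n$), and I assign to $f$ the center $W$ with $U_W=\langle P,Q\rangle$; since the coprime representation is unique up to a common scalar and post-composing $f$ by an automorphism replaces $(P,Q)$ by another basis of $\langle P,Q\rangle$, this descends to the quotient $\mathrm{End}_n(\mathbb{P}^1)/\mathrm{Aut}(\mathbb{P}^1)$.

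Finally I would verify that these two maps are mutually inverse, which is immediate from the dictionary since both compositions act as the identity on pencils. There is no genuine obstacle here, only two points requiring care: (a) pinning down that the indeterminacy in the target coordinates of $\pi_W$ matches exactly the $\mathrm{Aut}(\mathbb{P}^1)$-action by composition and the change of basis of $U_W$; and (b) the bookkeeping over a non-closed field, where "base point" and "degree" must both be read through $\gcd(P,Q)$ over $\overline{k}$, so that disjointness, base-point freeness, and $\deg f=n$ all coincide.
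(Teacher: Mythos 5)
Your proposal is correct and follows essentially the same route as the paper's proof: both identify hyperplanes through $W$ with a pencil of degree-$n$ binary forms (the paper via explicit coefficients $a_i,b_i$, you via the canonical identification with $2$-dimensional subspaces of $H^0(\mathbb{P}^1,\mathcal{O}(n))$), both match disjointness of $W$ from $\nu_n(\mathbb{P}^1)$ with coprimality of the pencil, and both observe that changing the defining equations of $W$ corresponds exactly to post-composing the morphism by an element of $\mathrm{Aut}(\mathbb{P}^1)$. Your more invariant phrasing through base-point-free pencils, and your explicit care about non-closed fields, are refinements of presentation rather than a different argument.
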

\begin{proof}
Let $x,y$ be the coordinates on $\mathbb{P}^1$. Then a degree $n$ endomorphism $f$ is given by $[p(x,y):q(x,y)]$ where $p$ and $q$ are homogeneous polynomials of degree $n$ without common factors. Write
$$p(x,y)=a_0x^n+a_1x^{n-1}y+\cdots+a_ny^n$$
$$q(x,y)=b_0x^n+b_1x^{n-1}y+\cdots+b_ny^n.$$
We then send $f$ to the linear subspace 
$$W_f:=\{a_0x_0+\cdots+a_nx_n=b_0x_0+\cdots+b_nx_n=0\}\in\mathbb{G}(n-2,n).$$
If $W_f\cap \nu_n(\mathbb{P}^1)\neq\varnothing$, then there exists $[x:y]\in\mathbb{P}^1$ such that
$$a_0x^n+\cdots+a_ny^n=b_0x^n+\cdots+b_ny^n=0.$$
This, however, implies that the polynomials $p(x,y)$ and $q(x,y)$ share a common factor, and thus $f$ is not of degree $n$. Therefore $W_f\cap \nu_n(\mathbb{P}^1)=\varnothing$.

Now take $W\in\mathbb{G}(n-2,n)$ such that $W\cap \nu_n(\mathbb{P}^1)=\varnothing$, let $L_0,L_1\in k[x_0,\ldots,x_n]_1$ be two defining equations for $W$, and consider the morphism $\pi=[L_0:L_1]\circ \nu_n:\mathbb{P}^1\to\mathbb{P}^1$. We first note that the sections $\nu_n^*L_0,\nu_n^*L_1$ do not have common roots, since that would contradict the fact that $W\cap \nu_n(\mathbb{P}^1)=\varnothing$. Secondly, these forms have degree $n$ since $\nu_n(\mathbb{P}^1)$ is of degree $n$ in $\mathbb{P}^n$. We see that different defining equations for $W$ differ by an action of $\text{PGL}(2,k)$, and this translates to composing the morphism $\pi$ by an automorphism of $\mathbb{P}^1$.
\end{proof}

The following corollary is clear from the previous proof:

\begin{corollary}
The previous correspondence gives a bijection between Galois morphisms of degree $n$ from $\mathbb{P}^1$ to $\mathbb{P}^1$ modulo the action of $\text{Aut}(\mathbb{P}^1)$, and Galois subspaces of $\mathbb{P}^n$ for $\nu_n$, disjoint from $\nu_n(\mathbb{P}^1)$.
\end{corollary}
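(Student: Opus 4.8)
The plan is to promote the bijection of Lemma~\ref{Bij} to the Galois objects sitting inside each side, the key input being an identity already implicit in its proof. First I would make explicit that the projection attached to $W_f$ recovers $f$ itself. Since $\nu_n^*(x_i)=x^{n-i}y^i$, the pullbacks satisfy $\nu_n^*\big(\sum_i a_i x_i\big)=p(x,y)$ and $\nu_n^*\big(\sum_i b_i x_i\big)=q(x,y)$, so that $\pi_{W_f}\circ\nu_n=[p:q]=f$. In other words, under the correspondence of the Lemma the endomorphism $f$ is precisely the morphism obtained by projecting $\nu_n(\mathbb{P}^1)$ away from $W_f$.

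Next I would check that Galoisness is a property of the subfield $f^*k(\mathbb{P}^1)\subseteq k(\mathbb{P}^1)$, and that this subfield is invariant under the $\text{Aut}(\mathbb{P}^1)$-action being quotiented out. That action is post-composition on the target: replacing the defining forms $L_0,L_1$ of $W$ by invertible linear combinations replaces $\pi=[L_0:L_1]$ by $\sigma\circ\pi$ for some $\sigma\in\text{Aut}(\mathbb{P}^1)$, which is exactly the freedom recorded at the end of the Lemma's proof. Since $(\sigma\circ f)^*=f^*\circ\sigma^*$ and $\sigma^*$ is an automorphism of $k(\mathbb{P}^1)$, we get $(\sigma\circ f)^*k(\mathbb{P}^1)=f^*k(\mathbb{P}^1)$. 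Hence Galoisness descends to the quotient $\text{End}_n(\mathbb{P}^1)/\text{Aut}(\mathbb{P}^1)$, and the left-hand collection in the corollary is well defined.

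Combining the two observations finishes the argument: by definition $W_f$ is a Galois subspace for $\nu_n$ exactly when the induced morphism $\pi_{W_f}\circ\nu_n$ gives a Galois extension of function fields, and by the identity of the first step this is exactly the condition that $f$ be a Galois morphism. Therefore the bijection of the Lemma sends the class of $f$ to $W_f$ and restricts to the asserted bijection between Galois morphisms of degree $n$ modulo $\text{Aut}(\mathbb{P}^1)$ and Galois subspaces disjoint from $\nu_n(\mathbb{P}^1)$.

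The argument is essentially bookkeeping and I expect no serious obstacle; the only point needing care is to identify correctly which $\text{Aut}(\mathbb{P}^1)$-action is in play. It must be post-composition on the target, since that is precisely the indeterminacy in the choice of the two linear forms cutting out $W$, and it is precisely this action that fixes the subfield $f^*k(\mathbb{P}^1)$ and hence preserves Galoisness.
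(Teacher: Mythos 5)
Your proposal is correct and matches the paper exactly: the paper states the corollary is ``clear from the previous proof,'' and your argument simply makes explicit the two facts that proof already contains, namely that $\pi_{W_f}\circ\nu_n=f$ under the correspondence and that the residual $\text{Aut}(\mathbb{P}^1)$-ambiguity is post-composition on the target, which fixes the subfield $f^*k(\mathbb{P}^1)$ and hence preserves Galoisness. No gaps; your write-up is just a fleshed-out version of what the paper leaves implicit.
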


This shows that characterizing Galois subspaces of $\mathbb{P}^n$ for $\nu_n$ is essentially the same as characterizing Galois morphisms from $\mathbb{P}^1$ to itself, and therefore the same as characterizing rational subfields of $k(t)$ that give Galois extensions.

Let $G$ be a finite group acting (effectively) on $\mathbb{P}^1$ such that its order is not divisible by the characteristic of the ground field $k$. By a close study of the Riemann-Hurwitz formula and the ramification points that may appear for this action, one can conclude that $G$ is either $\mathbb{Z}/n\mathbb{Z}$ for any $n\in\mathbb{N}$, the dihedral group $D_m$ of order $2m$ for any $m\in\mathbb{N}$, $A_4$, $S_4$ or $A_5$. Over the complex numbers, these last three groups appear as the (holomorphic) automorphism groups of the Platonic solids which can be inscribed in the Riemann sphere. Not all of these groups always appear, however, for an arbitrary field. Indeed, we have the following (cf. \cite[Prop. 1.1]{Beau}) which shows the conditions that need to be met in Proposition \ref{exceptional}:

\begin{proposition}
\begin{enumerate}
\item $\text{PGL}(2,k)$ contains $\mathbb{Z}/n\mathbb{Z}$ and the dihedral group $D_n$ if and only if $k$ contains $\zeta+\zeta^{-1}$ for some primitive $n$th root of unity $\zeta$.
\item $\text{PGL}(2,k)$ contains $A_4$ and $S_4$ if and only if $-1$ is the sum of two squares in $k$.
\item $\text{PGL}(2,k)$ contains $A_5$ if and only if $-1$ is the sum of two squares and $5$ is a square in $k$.
\end{enumerate}
\end{proposition}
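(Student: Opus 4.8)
The plan is to handle the three items by the same philosophy: translate the existence of a subgroup $G\subseteq\mathrm{PGL}(2,k)$ into a statement about lifts to $\mathrm{GL}(2,k)=M_2(k)^\times$, record the relevant conjugacy invariants (the eigenvalue ratio for a cyclic generator, and the square class of $M^2$ in $k^\times/(k^\times)^2$ for an involution), and then recognize the field-theoretic obstruction as the splitting of an explicit quaternion algebra.

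For item (1), suppose first $g\in\mathrm{PGL}(2,k)$ has order $n$ with lift $\tilde g$; its eigenvalue ratio $\zeta$ over $\bar k$ is a primitive $n$th root of unity, and since $\tilde g$ is defined over $k$ the two eigenvalues are either both in $k$ (so $\zeta\in k$) or Galois-conjugate over a quadratic extension with the nontrivial $\sigma$ sending $\zeta\mapsto\zeta^{-1}$; either way $\zeta+\zeta^{-1}$ is $\mathrm{Gal}$-invariant, hence in $k$. For the converse, $\zeta+\zeta^{-1}\in k$ forces $\zeta$ to satisfy $t^2-(\zeta+\zeta^{-1})t+1$ over $k$, so $[k(\zeta):k]\le 2$; when $\zeta\in k$ take $[x:y]\mapsto[\zeta x:y]$, and when $[k(\zeta):k]=2$ observe $N_{k(\zeta)/k}(\zeta)=\zeta\cdot\zeta^{-1}=1$, so by Hilbert's Theorem 90 we may write $\zeta=\alpha/\sigma(\alpha)$ for some $\alpha\in k(\zeta)^\times$; multiplication by $\alpha$ on $k(\zeta)$ is then an element of $\mathrm{GL}(2,k)$ with eigenvalue ratio exactly $\zeta$, giving an order-$n$ element of $\mathrm{PGL}(2,k)$. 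To pass to the dihedral group I would adjoin the inverting involution: in the split case $[x:y]\mapsto[y:x]$, and in the quadratic case the Galois map $\sigma$ itself, a $k$-linear involution of $k(\zeta)$ that conjugates multiplication by $\alpha$ to multiplication by $\sigma(\alpha)$, i.e. inverts the generator in $\mathrm{PGL}(2,k)$; conversely $D_n\supseteq\mathbb{Z}/n\mathbb{Z}$ gives back the same field condition.

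For items (2) and (3) the unifying tool is the identification of the binary (double) covers inside a quaternion algebra. The binary tetrahedral group of order $24$ lies in the unit group of the Hamilton algebra $H=\left(\frac{-1,-1}{k}\right)$, with image $A_4$ in $H^\times/k^\times$. Conversely, any realization of $A_4$ in $\mathrm{PGL}(2,k)$ has its three commuting involutions permuted cyclically by the order-$3$ elements, so they are $\mathrm{PGL}(2,k)$-conjugate and share one square class; choosing anticommuting lifts $A,B$ (so $A^2,\,B^2,\,(AB)^2=-A^2B^2$ agree up to squares) forces that common class to be $[-1]$, and after rescaling $A^2=B^2=-I$, $AB=-BA$, i.e. an embedding $H\hookrightarrow M_2(k)$, which exists iff $H$ splits. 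Since $\left(\frac{-1,-1}{k}\right)$ splits iff $-1=y^2+z^2$ is solvable in $k$, this is exactly item (2) for $A_4$; and when $H$ splits the explicit order-$3$ element $\frac12(I+A+B+AB)$ cyclically permutes $A,B,AB$, producing $A_4\subseteq\mathrm{PGL}(2,k)$. To reach $S_4$ I would adjoin $[I+A]$: one checks $(I+A)^2=2A$, so $[I+A]$ has order $4$, and conjugation by $I+A$ fixes $[A]$ while transposing $[B]$ and $[AB]$, an odd permutation of the three involutions, so no new field condition appears beyond the splitting of $H$, precisely because passing to $\mathrm{PGL}$ discards the scalar $\sqrt2$ that would be needed to make $I+A$ a unit quaternion.

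For $A_5$ the double cover is the binary icosahedral group, whose generators involve the golden ratio and hence live in $\left(\frac{-1,-1}{k}\right)$ exactly when $\zeta_5+\zeta_5^{-1}=\frac{-1+\sqrt5}{2}\in k$, i.e. when $5$ is a square; together with the splitting of $H$ this yields item (3), the forward direction following from $A_4\subseteq A_5$ and $\mathbb{Z}/5\mathbb{Z}\subseteq A_5$ via items (2) and (1). The main obstacle I anticipate is the $A_5$ construction: unlike the tetrahedral case there is no one-line averaging formula for the order-$5$ element, so one must write down explicit icosahedral generators in terms of $\sqrt5$, verify they satisfy the defining relations of $A_5$ inside $M_2(k)$, and confirm the representation is effective. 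This is where the bookkeeping is heaviest and where one must check that the two arithmetic conditions combine correctly rather than interfering.
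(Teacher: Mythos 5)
Your proposal is essentially a correct reconstruction of the proof in the paper's cited source: the paper itself does not prove this proposition but quotes it from Beauville \cite[Prop.~1.1]{Beau}, whose argument runs exactly along your lines --- the conjugation-invariant $\zeta+\zeta^{-1}$ of the eigenvalue ratio together with Hilbert~90 and the semilinear involution $\sigma$ for the cyclic/dihedral case, and for $A_4$, $S_4$, $A_5$ the lifting of the Klein four-group to anticommuting matrices generating a quaternion algebra, with the $3$-cycle forcing the common square class to be $[-1]$, so that existence is equivalent to the splitting of $\left(\frac{-1,-1}{k}\right)$, i.e.\ to $-1$ being a sum of two squares, with $\sqrt{5}\in k$ forced by the order-$5$ element via item (1). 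The loose ends you flag are genuinely routine and consistent with the paper's standing hypotheses: diagonalizability of the lift in item (1) uses $\operatorname{char} k\nmid n$ (assumed throughout the paper), items (2)--(3) implicitly need $\operatorname{char} k\neq 2$ (automatic since $\operatorname{char} k\nmid |G|$), and the $A_5$ converse is the standard explicit embedding of the binary icosahedral group, whose elements have coordinates in $\mathbb{Z}\left[\tfrac{1+\sqrt{5}}{2},\tfrac12\right]$, into the split algebra $\left(\frac{-1,-1}{k}\right)\simeq M_2(k)$.
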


By \cite[Theorem 4.2]{Beau}, two isomorphic finite groups of automorphisms are conjugate by automorphisms of the projective line, except for $G\simeq\mathbb{Z}/2\mathbb{Z}$, $G$ the Klein four group, or $G$ the dihedral group. We will look at this further on.

Let $D\in\mbox{Div}(\mathbb{P}^1)$ be a divisor of positive degree (which for $\mathbb{P}^1$ is equivalent to being very ample), and let $\varphi_D:\mathbb{P}^1\to\mathbb{P}^n$ be the associated embedding, where $n=\deg(D)$. Since $D$ is linearly equivalent to $n[1:0]$, we have that $\varphi_D$ is the Veronese embedding
$$[x:y]\mapsto[x^n:x^{n-1}y:\cdots:xy^{n-1}:y^n]$$ 
of $\mathbb{P}^1$ in $\mathbb{P}^n$ and its image is a rational normal curve. We are interested in characterizing all Galois subspaces of this embedding, thereby answering a question raised by Yoshihara (cf. \cite[(4)(b)(ii)]{Yoshiproblems}).\\
 
Let $D=\sum_{i}^rm_i[p_i:q_i]$, and consider the rational map
$$f_D:=\prod_{i}^r(q_ix-p_iy)^{-m_i}.$$
A trivial calculation shows that
\begin{eqnarray}\nonumber L(D)&:=&\{h\in k(\mathbb{P}^1):\text{div}(h)+D\geq0\}\\
\nonumber&=&\langle f_Dx^n,f_Dx^{n-1}y,\ldots,f_Dxy^{n-1},f_Dy^n\rangle\\
 \nonumber &=&f_DH^0(\mathbb{P}^1,\mathcal{O}_{\mathbb{P}^1}(n))
\end{eqnarray}

Therefore given a finite group of automorphisms $G$ of $\mathbb{P}^1$ that preserve the divisor $D$, in order to find the Galois subspace of $\mathbb{P}^n$ associated to the group $G$ we must understand the action of $G$ on $L(D)$ by pullback. Indeed, we are looking for a 2-dimensional subspace $\mathcal{L}\subseteq L(D)$ such that $G$ acts as the identity on $\mathbb{P}\mathcal{L}$.

In what follows we will divide our analysis according to the group that is acting.

\section{Cyclic group action}

Let $n\geq3$, assume that $k$ contains a primitive $n$th root of unity $\zeta_n$, and consider the order $n$ morphism 
$$C_n:[x:y]\mapsto[\zeta_nx:y].$$ 
By \cite[Theorem 4.2]{Beau}, we have that every cyclic subgroup of $\text{PGL}(2,k)$ is conjugate to $\langle C_n\rangle$. Take 
$$M:=\left(\begin{array}{cc}a&b\\c&d\end{array}\right)\in\text{PGL}(2,k).$$
We have that
$$T_n:=MC_nM^{-1}=\left(\begin{array}{cc}ad\zeta_n-bc&ab(1-\zeta_n)\\cd(\zeta_n-1)&ad-bc\zeta_n\end{array}\right)$$
and $T_n$ leaves the divisor $D:=n[a:c]=nM([1:0])$ invariant. By the previous section, we have that
$$L(D)=\langle f_Dx^n,f_Dx^{n-1}y,\ldots,f_Dxy^{n-1},f_Dy^n\rangle$$
where 
$$f_D=(cx-ay)^{-n}.$$
We see that
$$1=(cx-ay)^{n}f_D\in L(D)$$
and is trivially invariant under the action of $T_n$. Notice that the automorphism 
$$R:=\left(\begin{array}{cc}d&-b\\-c&a\end{array}\right)$$
sends $[b:d]$ to $[0:1]$ and $[a:c]$ to $[1:0]$; moreover $RT_n$ does the same. This implies that $RT_n$ is a multiple of $R$, and upon further inspection it is easy to see that this multiple is an $n$th root of unity. Therefore
$$(dx-by)^{n}f_D\in L(D)$$
is also invariant under composition by $T_n$. This means that the subspace of $L(D)$ generated by 
$$1=f_D\sum_{i=0}^n\binom{n}{i}(-1)^ia^ic^{n-i}x^{n-i}y^i$$
$$(dx-by)^{n}f_D=f_D\sum_{i=0}^n\binom{n}{i}(-1)^ib^id^{n-i}x^{n-i}y^i$$
is the eigenspace of $T_n$ for the eigenvalue 1. Therefore, using the notation in the introduction, we obtain the Galois subspaces
$$H_{[a:c],[b:d]}:=\{L_{a,c}=L_{b,d}=0\}.$$
These are the only Galois subspaces (disjoint from $\nu_n(\mathbb{P}^1)$) with cyclic Galois group and give a family $\mathcal{C}_n\subseteq\mathbb{G}(n-2,n)$; we will verify that the dimension of this family is 2 in Section \ref{ex}.

\section{Dihedral group action}

Let $n=2m\geq6$ be even, assume that $k$ contains a primitive $n$th root of unity $\zeta_n$, and set $\zeta_m:=\zeta_n^2$. For $\alpha\in k^\times/(k^\times)^2\mu_m(k)$ consider the morphisms 
$$C_m:[x:y]\mapsto[\zeta_mx:y]$$
$$I_\alpha:[x:y]\mapsto[\alpha y:x].$$
By \cite[Theorem 4.2]{Beau}, we have that every subgroup of $\text{PGL}(2,k)$ isomorphic to the dihedral group of order $2m$ is conjugate to $\langle C_m,I_\alpha\rangle$ for some $\alpha$, and the conjugacy classes of the dihedral group in $\text{PGL}(2,k)$ are parametrized by $\alpha\in k^\times/(k^\times)\mu_m(k)$.

Taking the same matrix
$$M=\left(\begin{array}{cc}a&b\\c&d\end{array}\right)\in\text{PGL}(2,k)$$
as before, we see that
$$T_m=MC_mM^{-1}=\left(\begin{array}{cc}ad\zeta_m-bc&ab(1-\zeta_m)\\cd(\zeta_m-1)&ad-bc\zeta_m\end{array}\right)$$
$$J_\alpha:=MI_\alpha M^{-1}=\left(\begin{array}{cc}bd-\alpha ac& \alpha a^2-b^2\\d^2-\alpha c^2&\alpha ac-bd\end{array}\right).$$
Now we take the divisor $D=m[a:c]+m[b:d]$ which is invariant under these two transformations. We set
$$f_D=(cx-ay)^{-m}(dx-by)^{-m},$$
and use the basis $\{f_Dx^n,f_Dx^{n-1}y,\ldots,f_Dxy^{n-1},f_Dy^n\}$ of $L(D)$. Using the same idea as the previous section, we see that
$$1=(cx-ay)^{m}(dx-by)^{m}f_D\in L(D)$$
is fixed by the action of $T_m$ and $J_\alpha$. With a little work, it is not difficult to show that 
$$\left(\alpha^m(cx-ay)^n+(dx-by)^n\right)f_D$$
is also fixed by both transformations. Therefore after expanding, we obtain that the group generated by $T_m$ and $J_\alpha$ acts with the trivial representation on the space generated by
$$f_D\sum_{i+j+l=m}\binom{m}{i,j,l}(-1)^i(ad+bc)^i(ab)^j(cd)^lx^{i+2l}y^{i+2j}$$
$$f_D\sum_{i=0}^{n}\binom{n}{i}(-1)^i(\alpha^ma^{n-i}c^i+b^{n-i}d^i)x^{i}y^{n-i}.$$
This implies that the linear space 
$$V_{[a:b:c:d]}^\alpha:=\{A_{a,b,c,d}=B_{a,b,c,d}^\alpha=0\}$$
is a Galois subspace disjoint from $\nu_n(\mathbb{P}^1)$ with dihedral Galois group. This gives a family $\mathcal{D}_m^\alpha\subseteq\mathbb{G}(n-2,n)$ whose dimension we will verify to be 3 in the next section.

\section{Exceptional cases} \label{ex}

We will now look at the remaining cases (i.e. $n\in\{2,4,12,24,60\}$); assume here that $k$ is algebraically closed. For $n=2$, each linear projection composed with the Veronese embedding is either of degree 2 or 1, depending on whether the center of projection is disjoint from $\nu_2(\mathbb{P}^1)$ or not. In either case, the induced field extension is of the same degree, and is therefore Galois. Therefore every element of $\mathbb{G}(0,2)=\mathbb{P}^2$ induces a Galois morphism.

For $n\in\{4,12,24,60\}$, we invite the interested reader to explicitly calculate the Galois subspaces as done above in the cyclic and dihedral cases. Although tedious, the calculations are elementary. We will proceed to show that in each of these cases, for the Klein four group $K_4$, $A_4$, $S_4$ and $A_5$ there is a locally closed 3-dimensional family of Galois subspaces in the respective Grassmannian.

We note that $\mbox{PGL}(2,k)$ is 3-dimensional, and the conjugacy class of a given subgroup $G\leq\text{PGL}(2,k)$ is parametrized by $\mbox{PGL}(2,k)/N_G$, where $N_G$ is the normalizer of $G$. If $C_G$ is the centralizer of $G$ in $\text{PGL}(2,k)$, we have a natural homomorphism $\varphi:N_G\to\text{Aut}(G)$ whose kernel is $C_G$. In particular, if $G$ is finite, then $C_G$ is of finite index in $N_G$. By the table on page 27 of \cite{Beau}, we have that the centralizers of $K_4$, $D_n$, $A_4$, $S_4$ and $A_5$ in $\text{PGL}(2,k)$ are all finite, and therefore the normalizers are also finite. In the case of $\mathbb{Z}/n\mathbb{Z}$, the centralizer is 1-dimensional and therefore the normalizer is as well. By \cite[Theorem 4.2]{Beau}, since we are assuming $k$ is algebraically closed, there is only one conjugacy class of each of finite subgroup of $\text{PGL}(2,k)$.

\begin{proposition}
The families $\mathcal{C}_n$ are 2-dimensional and the families $\mathcal{D}_{m}$ are 3-dimensional for all $n,m\geq 3$. For $K_4$, $A_4$, $S_4$ and $A_5$ we obtain families of Galois subspaces $\mathcal{K}\subseteq\mathbb{G}(2,4)$, $\mathcal{A}_4\subseteq\mathbb{G}(10,12)$, $\mathcal{S}_4\subseteq\mathbb{G}(22,24)$ and $\mathcal{A}_5\subseteq\mathbb{G}(58,60)$, respectively, that are all 3-dimensional.
\end{proposition}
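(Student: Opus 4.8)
The plan is to compute the dimension of each family as the dimension of the orbit of a fixed Galois subspace under the $\mathrm{PGL}(2,k)$-action on $\mathbb{G}(n-2,n)$. By the corollary following Lemma \ref{Bij} and the analysis in the cyclic and dihedral sections, every Galois subspace with a given Galois group $G$ arises from a fixed model subspace $W_G$ (determined by the fixed eigenspace computation) by varying the conjugating matrix $M\in\mathrm{PGL}(2,k)$, i.e. by acting by $\mathrm{Aut}(\mathbb{P}^1)=\mathrm{PGL}(2,k)$. Thus the family is the orbit $\mathrm{PGL}(2,k)\cdot W_G$, and by the orbit-stabilizer principle its dimension equals $\dim\mathrm{PGL}(2,k)-\dim\mathrm{Stab}(W_G)=3-\dim\mathrm{Stab}(W_G)$.

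First I would identify the stabilizer of $W_G$ in $\mathrm{PGL}(2,k)$ with the normalizer $N_G$ of $G$. The point is that $M$ and $M'$ produce the same Galois subspace precisely when they conjugate the model group $G=\langle C_n\rangle$ (resp. $\langle C_m,I_\alpha\rangle$, resp. $K_4$, $A_4$, $S_4$, $A_5$) to the same subgroup of $\mathrm{PGL}(2,k)$, since the subspace is exactly the fixed eigenspace of the trivial representation for that conjugated group; hence the stabilizer is $N_G$ up to the finite ambiguity coming from different elements inducing the same conjugation. Using the centralizer/normalizer discussion preceding the proposition: for $\mathbb{Z}/n\mathbb{Z}$ the centralizer is $1$-dimensional so $N_{\mathbb{Z}/n\mathbb{Z}}$ is $1$-dimensional, giving $\dim\mathcal{C}_n=3-1=2$; for $D_m$, $K_4$, $A_4$, $S_4$ and $A_5$ the centralizers (and hence normalizers) are finite by the table on page 27 of \cite{Beau}, so the stabilizer is $0$-dimensional and each orbit has dimension $3-0=3$.

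To make this rigorous I would argue that the orbit map $\mathrm{PGL}(2,k)\to\mathbb{G}(n-2,n)$, $M\mapsto M\cdot W_G$, factors through $\mathrm{PGL}(2,k)/N_G$ and is injective on this quotient, so that the image is locally closed of dimension equal to $\dim\mathrm{PGL}(2,k)/N_G$. The factorization through $N_G$ follows because $M\cdot W_G=M'\cdot W_G$ forces $M^{-1}M'$ to preserve the fixed eigenspace, hence to normalize $G$ (as $G$ is recoverable from $W_G$ as the pointwise stabilizer of the corresponding morphism up to the action, via the bijection of Lemma \ref{Bij}); injectivity on the quotient is the reverse implication. For $\mathcal{D}_m$, one must also note that over $\overline{k}$ the parameter $\alpha$ becomes a square and the various $\mathcal{D}_m^\alpha$ coalesce into the single orbit $\mathcal{D}_m$, consistent with the statement.

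The main obstacle I expect is justifying that the stabilizer of $W_G$ in $\mathrm{PGL}(2,k)$ is \emph{exactly} $N_G$ rather than merely containing $C_G$ with finite index: one must verify that an automorphism normalizing $G$ genuinely fixes the particular two-dimensional fixed subspace $\mathbb{P}\mathcal{L}$ (and not some other $G$-invariant subspace), and conversely that fixing $W_G$ forces normalization of $G$. This requires unwinding how the fixed eigenspace depends on $G$; since $N_G/C_G$ is finite in all the relevant cases, the dimension count is insensitive to this subtlety, but the locally-closedness and the identification of the orbit as a genuine algebraic subvariety of the Grassmannian need the orbit map to be shown a locally closed immersion onto its image, which is the one nonformal point in the argument.
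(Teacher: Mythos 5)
Your proposal is correct and follows essentially the same route as the paper: the paper's argument is precisely the dimension count $\dim\mathrm{PGL}(2,k)/N_G=3-\dim N_G$ in the paragraph preceding the proposition, using Beauville's table to see that $N_G$ is finite for $K_4$, $D_m$, $A_4$, $S_4$, $A_5$ and $1$-dimensional for $\mathbb{Z}/n\mathbb{Z}$, together with uniqueness of the conjugacy class over $\overline{k}$. Your orbit--stabilizer formulation, including the identification of the stabilizer of $W_G$ with $N_G$ (the deck group being recoverable from the subspace via Lemma \ref{Bij}), is just a more explicit rendering of that same argument.
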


This completes the proof of Theorem \ref{main} and Proposition \ref{exceptional}.

\section{Non-disjoint Galois subspaces}\label{Nondisjoint}

For this section we will assume that $k$ is algebraically closed. Let $W\in\mathbb{G}(n-2,n)$ be a subspace that is not disjoint from $\nu_n(\mathbb{P}^1)$. Projection from $W$, restricted to $\nu_n(\mathbb{P}^1)$, can give a Galois morphism as in the case of $W=\{x_0=x_2=0\}$. 

However, we see by Lemma \ref{Bij} and its proof that if $W\in\mathbb{G}(n-2,n)$ and $W\cap \nu_n(\mathbb{P}^1)\neq\varnothing$, then $\pi_W \nu_n:\mathbb{P}^1\to\mathbb{P}^1$ is of degree less than $n$, where $\pi_W:\mathbb{P}^n\dashrightarrow\mathbb{P}^1$ is the projection with center $W$. As a matter of fact, by the same lemma, there exists a unique $\widetilde{W}\in\mathbb{G}(m-2,m)$ such that $\pi_{\widetilde{W}} \nu_m=\pi_W \nu_n$ (modulo automorphisms of $\mathbb{P}^1$), where $m=\deg(\pi_W \nu_n)$. We therefore obtain a morphism
$$\Phi_{n,m}:\frak{X}_{n,m}:=\{W\in\mathbb{G}(n-2,n):\deg(\pi_W\nu_n)=m\}\to\mathbb{G}(m-2,m).$$

\begin{proposition}
The fibers of $\Phi_{n,m}$ are irreducible and of dimension $n-m$. In particular $\Phi_{n,m}$ is injective if and only if $n=m$.
\end{proposition}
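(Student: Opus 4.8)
The plan is to translate the whole question, via the dictionary of Lemma~\ref{Bij}, into the language of pencils of binary forms, where the fiber structure becomes transparent. First I would pull back the two defining linear forms of a subspace $W\in\mathbb{G}(n-2,n)$ by $\nu_n$; since $\nu_n^*\colon H^0(\mathbb{P}^n,\mathcal{O}(1))\to V_n:=H^0(\mathbb{P}^1,\mathcal{O}(n))$ is an isomorphism, this identifies $\mathbb{G}(n-2,n)$ with the Grassmannian $\mathbb{G}(2,V_n)$ of $2$-dimensional subspaces $U=\langle p,q\rangle$ of the space $V_n$ of binary forms of degree $n$. Under this identification $\pi_W\nu_n$ is the rational map $[p:q]$, whose degree as an endomorphism of $\mathbb{P}^1$ equals $n-\deg g$, where $g=\gcd(p,q)$ is the base divisor of the pencil $U$. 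Thus $\mathfrak{X}_{n,m}$ corresponds to those $U$ whose base divisor has degree exactly $d:=n-m$.

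Next I would fix $\widetilde W\in\mathbb{G}(m-2,m)$ in the image of $\Phi_{n,m}$, corresponding under the same dictionary to a base-point-free pencil $\widetilde U=\langle\tilde p,\tilde q\rangle\subseteq V_m$ with $\tilde p,\tilde q$ coprime. I would then describe the fiber explicitly: a subspace $U$ lies in $\Phi_{n,m}^{-1}(\widetilde W)$ exactly when $U=g\widetilde U:=\langle g\tilde p,g\tilde q\rangle$ for some nonzero $g\in V_d$, since dividing out the base divisor of $U$ must recover $\widetilde U$. Because $\tilde p,\tilde q$ are coprime, the base divisor of $g\widetilde U$ is precisely $g$, so $g$ is recovered from $U$ and is determined up to scalar; hence $g\mapsto g\widetilde U$ gives a bijection from $\mathbb{P}(V_d)\cong\mathbb{P}^{n-m}$ onto the fiber.

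To upgrade this set-theoretic bijection to the claimed geometric statement, I would observe that $g\mapsto g\widetilde U$ is a morphism from the irreducible variety $\mathbb{P}^{n-m}$ into $\mathbb{G}(2,V_n)$, multiplication by the fixed coprime pair keeping the image genuinely $2$-dimensional. Its image, namely the fiber, is therefore irreducible, and injectivity forces the dimension of the image to equal that of the source, $n-m$. The final assertion is then immediate: $\Phi_{n,m}$ is injective iff every nonempty fiber is a single point, iff $d=n-m=0$, and for $n=m$ the map is simply the identity on the disjoint locus $\mathfrak{X}_{n,n}$.

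The routine verifications---that $\nu_n^*$ is an isomorphism, and that the degree of $[p:q]$ drops by exactly $\deg\gcd(p,q)$---are standard. The only point requiring genuine care, and the one I would treat as the main obstacle, is confirming that the constructions $U\mapsto g$ (extracting the base divisor) and $g\mapsto g\widetilde U$ are morphisms \emph{in families}, so that the fiber inherits the structure of $\mathbb{P}^{n-m}$ as a variety rather than merely as a set. This amounts to checking that the base divisor of a pencil varies algebraically on the stratum $\mathfrak{X}_{n,m}$, which holds precisely because fixing the degree of that divisor is what carves out the locally closed locus on which the relevant resultant conditions hold identically; once this is in place, the irreducibility and dimension of the fiber follow as above.
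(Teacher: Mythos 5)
Your proof is correct, but it reaches the conclusion by a more direct route than the paper. The paper first computes $\dim\frak{X}_{n,m}=n+m-2$ by parametrizing the whole stratum via a morphism $\theta$ from (an open set of) $\text{Sym}^{n-m}(\mathbb{P}^1)\times\text{Sym}^{m}(\mathbb{P}^1)\times\text{Sym}^{m}(\mathbb{P}^1)$, whose fibers are the $2$-dimensional fibers of the intersection map $(\mathbb{G}(n-1,n)\times\mathbb{G}(n-1,n))\backslash\Delta\to\mathbb{G}(n-2,n)$; it then gets the dimension $n-m$ of a \emph{general} fiber of $\Phi_{n,m}$ from the generic fiber-dimension theorem, and only afterwards identifies the fiber over a fixed $V$ as $\theta(\text{Sym}^{n-m}(\mathbb{P}^1)\times\{D_2\}\times\{D_3\})$ to obtain irreducibility. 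Your argument makes that last identification the centerpiece: after translating via Lemma \ref{Bij} into pencils of binary forms, you exhibit the fiber over $\widetilde{W}\leftrightarrow\widetilde{U}=\langle\tilde p,\tilde q\rangle$ as the image of the injective morphism $g\mapsto g\widetilde{U}$ from $\mathbb{P}(V_{n-m})\cong\text{Sym}^{n-m}(\mathbb{P}^1)$ --- the same parametrization as the paper's, with $\operatorname{div}(g)$ playing the role of $D_1$ --- and read off irreducibility and dimension simultaneously. This buys you something the paper's generic-fiber count does not give directly: \emph{every} fiber (not just a general one) has dimension exactly $n-m$, and since $\mathbb{P}(V_{n-m})$ is proper and $g\tilde p$, $g\tilde q$ have greatest common divisor exactly $g$, each fiber is in fact closed in the Grassmannian. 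What you lose is the auxiliary computation $\dim\frak{X}_{n,m}=n+m-2$, which your method would recover only afterwards (fiber dimension plus $\dim\mathbb{G}(m-2,m)$-image count) rather than obtain en route. Two small remarks: your worry about the gcd varying algebraically in families concerns whether $\Phi_{n,m}$ is a morphism, which the paper asserts when defining it and which is not needed for your fiber argument --- the set-theoretic fiber with its reduced structure is all the statement requires, and your proper injective parametrization handles it; and your step ``dividing out the base divisor recovers $\widetilde{U}$'' is justified exactly because post-composition by $\text{Aut}(\mathbb{P}^1)$ changes the basis $(p_0,q_0)$ of the residual pencil but not its span, so the fiber description $U=g\widetilde{U}$ is complete.
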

\begin{proof}
Let us first calculate the dimension of the fibers. We see that the image of $\Phi_{n,m}$ consists of those subspaces that are disjoint from $\nu_m(\mathbb{P}^1)$. Indeed, we have a map $i_{m,n}:\mathbb{G}(m-2,m)\hookrightarrow\mathbb{G}(n-2,n)$ that takes a subspace defined by two equations and sends it to the subspace of dimension $n-2$ defined by the same two equations; it is easy to see that $\Phi_{n,m}i_{m,n}$ is the identity on those subspaces of $\mathbb{G}(m-2,m)$ that are disjoint from $\nu_m(\mathbb{P}^1)$.

In particular, if $F$ is a general fiber of $\Phi_{n,m}$, then
$$\dim F=\dim\frak{X}_{n,m}-2(m-1).$$
Take $W\in\frak{X}_{n,m}$ and let it be the intersection of two hyperplanes $H_1,H_2\in\mathbb{G}(n-1,n)$. Write
$$\nu_n^*H_1=p_1+\cdots+p_n$$
$$\nu_n^*H_2=q_1+\cdots+q_n$$
where the above points are not necessarily different. Then since $W\in\frak{X}_{n,m}$, we have that exactly $n-m$ of the $p_i$ (counted with multiplicity) are equal to $n-m$ of the $q_i$ (counted with multiplicity). We have a rational map
$$\text{Sym}^{n-m}(\mathbb{P}^1)\times\text{Sym}^{m}(\mathbb{P}^1)\times\text{Sym}^{m}(\mathbb{P}^1)\dashrightarrow\mathbb{G}(n-2,n)$$
where
$$(D_1,D_2,D_3)\mapsto\text{span}\left\{\nu_n(D_1+D_2)\right\}\cap\text{span}\left\{\nu_n(D_1+D_3)\right\}.$$
Let $\frak{Y}_{n,m}$ be the set of all $(D_1,D_2,D_3)\in\text{Sym}^{n-m}(\mathbb{P}^1)\times\text{Sym}^{m}(\mathbb{P}^1)\times\text{Sym}^{m}(\mathbb{P}^1)$ such that $D_1+D_2\neq D_1+D_3$ and $\text{supp}(D_2)\cap\text{supp}(D_3)=\varnothing$. Then we have a morphism
$$\theta:\frak{Y}_{n,m}\to\mathbb{G}(n-2,n)$$
whose image is $\frak{X}_{n,m}$. Since the natural intersection morphism 
$$(\mathbb{G}(n-1,n)\times\mathbb{G}(n-1,n))\backslash\Delta\to\mathbb{G}(n-2,n)$$
has 2-dimensional fibers, we obtain that the fibers of the map $\theta$ are 2-dimensional. This implies that 
$$\dim\frak{X}_{n,m}=n+m-2.$$
We now conclude that a general fiber of $\Phi_{n,m}$ has dimension 
$$n+m-2-2(m-1)=n-m.$$
A more careful analysis shows that if $V\in\mathbb{G}(m-2,m)$ and is disjoint from $\nu_m(\mathbb{P}^1)$, and $(D_1,D_2,D_3)\in\theta^{-1}\Phi_{n,m}^{-1}(V)$ is any preimage, then 
$$\Phi_{n,m}^{-1}(V)=\theta(\text{Sym}^{n-m}(\mathbb{P}^1)\times\{D_2\}\times\{D_3\})$$
and is therefore irreducible.

\end{proof}

With respect to determining when an element $W\in\mathbb{G}(n-2,n)$ that is not disjoint from $\nu_n(\mathbb{P}^1)$ gives a Galois subspace, it is clear that we have the following:

\begin{proposition}
If $W\in\mathbb{G}(n-2,n)$ is such that $\deg(\pi_W\nu_n)=m$, then $W$ is a Galois subspace for $\nu_n$ if and only if $\Phi_{n,m}(W)$ belongs to one of the families presented in Theorem \ref{main} and Proposition \ref{exceptional}.
\end{proposition}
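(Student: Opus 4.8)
The plan is to reduce the statement entirely to the already-established classification of \emph{disjoint} Galois subspaces, transported along the morphism $\Phi_{n,m}$. The essential observation is that whether $W$ gives a Galois morphism is a property of the induced self-map $\pi_W\nu_n\colon\mathbb{P}^1\to\mathbb{P}^1$ alone, and that this self-map is, by the very construction of $\Phi_{n,m}$, the same — up to post-composition by an automorphism of the target $\mathbb{P}^1$ — as the self-map $\pi_{\widetilde{W}}\nu_m$ attached to the disjoint subspace $\widetilde{W}:=\Phi_{n,m}(W)\in\mathbb{G}(m-2,m)$.

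First I would record that $W$ is a Galois subspace for $\nu_n$ precisely when the morphism $\pi_W\nu_n$ (which extends to a genuine morphism of degree $m$ since the source is a smooth curve) induces a Galois extension of function fields. Next I would show that this property is invariant under post-composition by $\text{Aut}(\mathbb{P}^1)$: if $\sigma$ is an automorphism of the target, then $(\sigma\pi)^*=\pi^*\sigma^*$, and since $\sigma^*$ is an automorphism of $k(\mathbb{P}^1)$ the subfield $(\sigma\pi)^*k(\mathbb{P}^1)=\pi^*\sigma^*k(\mathbb{P}^1)=\pi^*k(\mathbb{P}^1)$ is literally unchanged. Hence $\pi_W\nu_n$ and $\pi_{\widetilde{W}}\nu_m$ determine the same subextension of $k(\mathbb{P}^1)$, so one is Galois if and only if the other is.

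It then remains to identify Galois-ness of $\pi_{\widetilde{W}}\nu_m$ with membership of $\widetilde{W}$ in the listed families. By the definition of $\Phi_{n,m}$, the subspace $\widetilde{W}$ is disjoint from $\nu_m(\mathbb{P}^1)$, so the corollary to Lemma \ref{Bij} tells us that $\widetilde{W}$ is a Galois subspace for $\nu_m$ exactly when $\pi_{\widetilde{W}}\nu_m$ is a Galois morphism. Since we are over an algebraically closed field, $k$ contains all requisite roots of unity and the hypotheses of Theorem \ref{main} hold for $\nu_m$; combining Theorem \ref{main} with Proposition \ref{exceptional} then yields a complete list of the disjoint Galois subspaces of $\mathbb{G}(m-2,m)$, namely the families $\mathcal{C}_m$, $\mathcal{D}_m$ together with the exceptional families for $m\in\{2,4,12,24,60\}$. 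Thus $\widetilde{W}=\Phi_{n,m}(W)$ is Galois if and only if it lies in one of these families, which is exactly the asserted equivalence.

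The argument is essentially formal once the machinery of $\Phi_{n,m}$ is in place, so I do not anticipate a serious obstacle; the one point that genuinely needs care is the equivalence \emph{$\pi_W\nu_n$ Galois $\iff$ $\pi_{\widetilde{W}}\nu_m$ Galois}. Here one must confirm that the ambiguity ``modulo $\text{Aut}(\mathbb{P}^1)$'' in the definition of $\widetilde{W}$ is post-composition on the target, and not a reparametrization of the source that could a priori alter the extension — this is precisely what the identity $(\sigma\pi)^*k(\mathbb{P}^1)=\pi^*k(\mathbb{P}^1)$ settles. A secondary point worth stating explicitly is that the classification invoked for $\nu_m$ is legitimately available for the relevant $m$, i.e.\ that the root-of-unity and characteristic hypotheses of Theorem \ref{main} are satisfied in the algebraically closed setting of this section.
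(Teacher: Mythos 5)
Your proposal is correct and matches the paper's intent exactly: the paper states this proposition without proof (``it is clear''), the implicit argument being precisely your chain --- $\Phi_{n,m}(W)$ determines the same self-map of $\mathbb{P}^1$ up to post-composition by $\mathrm{Aut}(\mathbb{P}^1)$, which leaves the subfield $\pi^*k(\mathbb{P}^1)$ and hence the Galois property unchanged, so the classification of disjoint Galois subspaces in $\mathbb{G}(m-2,m)$ applies. Your explicit verification that the $\mathrm{Aut}(\mathbb{P}^1)$-ambiguity acts on the target (via the proof of Lemma \ref{Bij}) is exactly the point the paper leaves tacit, and your writeup supplies it correctly.
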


We will define $\Phi_{n,1}(\mathcal{C}_1)$ to be the set of all $W\in\mathbb{G}(n-2,n)$ such that $\pi_W\nu_n$ is an automorphism of $\mathbb{P}^1$; by the previous analysis it is easy to see that this family is $(n-1)$-dimensional.

\begin{theorem}
 Table \ref{classification} gives the list of all families of Galois subspaces for $\nu_n:\mathbb{P}^1\to\mathbb{P}^n$.
\end{theorem}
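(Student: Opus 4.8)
The plan is to stratify the Grassmannian by the degree of the induced self-map and reduce everything to the already-classified disjoint case in lower degree. First I would write $\mathbb{G}(n-2,n)=\bigsqcup_{m=1}^{n}\mathfrak{X}_{n,m}$, where $\mathfrak{X}_{n,m}=\{W:\deg(\pi_W\nu_n)=m\}$; by Lemma \ref{Bij} and its consequences the degree is forced to lie between $1$ and $n$, and these pieces are locally closed and disjoint by construction. By the second proposition of this section, a subspace $W\in\mathfrak{X}_{n,m}$ is Galois for $\nu_n$ if and only if $\Phi_{n,m}(W)\in\mathbb{G}(m-2,m)$ is a Galois subspace disjoint from $\nu_m(\mathbb{P}^1)$, simply because $\pi_W\nu_n$ and $\pi_{\Phi_{n,m}(W)}\nu_m$ are the same morphism of $\mathbb{P}^1$ up to $\mathrm{Aut}(\mathbb{P}^1)$, and Galois-ness of the subspace is read off from that morphism. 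Hence the Galois locus inside $\mathfrak{X}_{n,m}$ is precisely $\Phi_{n,m}^{-1}(\mathcal{G}_m)$, where $\mathcal{G}_m\subseteq\mathbb{G}(m-2,m)$ denotes the disjoint Galois subspaces in degree $m$.

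Next I would insert the complete description of $\mathcal{G}_m$ for each value of $m$. For $m\geq3$, Theorem \ref{main} together with Proposition \ref{exceptional} — whose field-theoretic hypotheses on roots of unity and on sums of squares all hold automatically since $k$ is algebraically closed — identify $\mathcal{G}_m$ as the union of the cyclic family $\mathcal{C}_m$, the dihedral family $\mathcal{D}_{m/2}$ when $m$ is even, and one of $\mathcal{K}$, $\mathcal{A}_4$, $\mathcal{S}_4$, $\mathcal{A}_5$ in the four exceptional degrees $m\in\{4,12,24,60\}$. For $m=2$ every degree-two map is Galois, so $\mathcal{G}_2$ is all of $\mathbb{G}(0,2)$ away from $\nu_2(\mathbb{P}^1)$, while the degenerate value $m=1$ is handled by the ad hoc family $\Phi_{n,1}(\mathcal{C}_1)$ of those $W$ for which $\pi_W\nu_n$ is an automorphism. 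Pulling each named family back through $\Phi_{n,m}$, whose fibers are irreducible of dimension $n-m$ by the first proposition of this section, converts a family of dimension $d$ in $\mathbb{G}(m-2,m)$ into an irreducible locally closed family of dimension $d+(n-m)$ in $\mathbb{G}(n-2,n)$; these are exactly the rows of Table \ref{classification}.

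To finish I would record disjointness and the numerology. The strata $\mathfrak{X}_{n,m}$ are disjoint, and within a fixed stratum the base families in $\mathbb{G}(m-2,m)$ are pairwise disjoint by the disjoint-union assertion of Theorem \ref{main}, so their preimages are pairwise disjoint as well and the listed families genuinely decompose the Galois locus as a disjoint union of locally closed subvarieties. Each family has dimension $d+(n-m)$, which is maximized by the degree-two contribution at $m=2$, giving $2+(n-2)=n$; this is the largest family. Counting the contributions — one cyclic-type family for each $m\in\{1,\ldots,n\}$, one dihedral-type family (with $\mathcal{K}$ playing the role of $D_2$ at $m=4$) for each even $m\in\{4,\ldots,n\}$, and the three exceptional families at $m\in\{12,24,60\}$ — yields $n+\lfloor n/2\rfloor+2$ families once $n\geq60$, so that $A_4$, $S_4$ and $A_5$ all occur as sub-degrees.

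The substance of the classification has already been established in the preceding sections, so the remaining work is organizational rather than deep. I expect the main point requiring care to be the completeness of the stratum-by-stratum list: one must verify that the degenerate degree-one stratum is correctly accounted for, since it is not covered by Theorem \ref{main}, and that the exceptional degrees are neither omitted nor double-counted against the generic cyclic and dihedral families. The only genuine subtlety should be bookkeeping the exact count in the boundary regime $n<60$, where some of the exceptional sub-degrees fall out of range and the running total $n+\lfloor n/2\rfloor+2$ must be corrected accordingly.
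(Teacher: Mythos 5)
Your proposal is correct and follows essentially the same route as the paper: stratify $\mathbb{G}(n-2,n)$ by the degree $m$ of $\pi_W\nu_n$, use the proposition that $W$ is Galois if and only if $\Phi_{n,m}(W)$ lies in one of the disjoint-case families of Theorem \ref{main} and Proposition \ref{exceptional}, and add the fiber dimension $n-m$ of $\Phi_{n,m}$ to each base family's dimension to produce the rows and counts of Table \ref{classification}. Your dimension computations and the family count $n+\lfloor n/2\rfloor+2$ for $n\geq60$ match the paper exactly, including the ad hoc treatment of the degree-one stratum.
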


In particular, we see that the largest family is $\Phi_{n,2}^{-1}(\mathcal{C}_2)$ which is $n$-dimensional.

\begin{table}[]
\begin{tabular}{|c|c|c|c|}
 \hline& Families of Galois subspaces & Dimension of families & Number of families  \\\hline&&&\\
 $n=2,3$&$\Phi_{n,1}^{-1}(\mathcal{C}_1)$& $n-1$&$n$\\
 &$\Phi_{n,k}^{-1}(\mathcal{C}_k)$, $k=2,\ldots,n$  &$n-k+2$&\\&&&\\\hline&&&\\

 $4\leq n\leq 11$&$\Phi_{n,1}^{-1}(\mathcal{C}_1)$& $n-1$&$n+\lfloor\frac{n}{2}\rfloor-1$\\
 &$\Phi_{n,k}^{-1}(\mathcal{C}_k)$, $k=2,\ldots,n$& $n-k+2$&\\
 &$\Phi_{n,2k}^{-1}(\mathcal{D}_k)$, $k=3,\ldots,\lfloor\frac{n}{2}\rfloor$&$n-2k+3$&\\
 &$\Phi_{n,4}^{-1}(\mathcal{K})$&$n-1$&\\&&&\\\hline&&&\\
  $12\leq n\leq 23$&$\Phi_{n,1}^{-1}(\mathcal{C}_1)$& $n-1$&$n+\lfloor\frac{n}{2}\rfloor$\\
 &$\Phi_{n,k}^{-1}(\mathcal{C}_k)$, $k=2,\ldots,n$& $n-k+2$&\\
 &$\Phi_{n,2k}^{-1}(\mathcal{D}_k)$, $k=3,\ldots,\lfloor\frac{n}{2}\rfloor$&$n-2k+3$&\\
 &$\Phi_{n,4}^{-1}(\mathcal{K})$&$n-1$&\\
 &$\Phi_{n,12}^{-1}(\mathcal{A}_4)$&$n-9$&\\&&&\\\hline&&&\\
 $24\leq n\leq 59$&$\Phi_{n,1}^{-1}(\mathcal{C}_1)$& $n-1$&$n+\lfloor\frac{n}{2}\rfloor+1$\\
 &$\Phi_{n,k}^{-1}(\mathcal{C}_k)$, $k=2,\ldots,n$& $n-k+2$&\\
 &$\Phi_{n,2k}^{-1}(\mathcal{D}_k)$, $k=3,\ldots,\lfloor\frac{n}{2}\rfloor$&$n-2k+3$&\\
 &$\Phi_{n,4}^{-1}(\mathcal{K})$&$n-1$&\\
 &$\Phi_{n,12}^{-1}(\mathcal{A}_4)$&$n-9$&\\
 &$\Phi^{-1}(\mathcal{S}_4)$&$n-21$&\\&&&\\\hline&&&\\
  $n\geq60$&$\Phi_{n,1}^{-1}(\mathcal{C}_1)$& $n-1$&$n+\lfloor\frac{n}{2}\rfloor+2$\\
 &$\Phi_{n,k}^{-1}(\mathcal{C}_k)$, $k=2,\ldots,n$& $n-k+2$&\\
 &$\Phi_{n,2k}^{-1}(\mathcal{D}_k)$, $k=3,\ldots,\lfloor\frac{n}{2}\rfloor$&$n-2k+3$&\\
 &$\Phi_{n,4}^{-1}(\mathcal{K})$&$n-1$&\\
 &$\Phi_{n,12}^{-1}(\mathcal{A}_4)$&$n-9$&\\
 &$\Phi_{n,24}^{-1}(\mathcal{S}_4)$&$n-21$&\\
 &$\Phi_{n,60}^{-1}(\mathcal{A}_5)$&$n-57$&\\&&&\\\hline
\end{tabular}
\caption{List of all families of Galois subspaces}
\label{classification}
\end{table}

\clearpage

\end{document}